\newtheorem{theorem}{Theorem}[section]
\theoremstyle{definition}
\newtheorem{remark}[theorem]{Remark}
\numberwithin{equation}{section}
\begin{document}
\title[Upper bounds on the first eigenvalue via
Bakry-\'{E}mery curvature II]
{Upper bounds on the first eigenvalue for a diffusion operator via
Bakry-\'{E}mery Ricci curvature II}
\author{Jia-Yong Wu}

\address{Department of Mathematics, Shanghai Maritime University,
Haigang Avenue 1550, Shanghai 201306, P. R. China}

\email{jywu81@yahoo.com}

\thanks{This work is partially supported by the NSFC (No. 11101267)
and the Science and Technology Program of Shanghai Maritime
University (No. 20120061).}

\subjclass[2000]{Primary 35P15; Secondary 58J50, 53C21}

\dedicatory{}

\keywords{Bakry-\'{E}mery curvature, diffusion operator,
eigenvalue estimate, gradient estimate, Ricci soliton}
\begin{abstract}
Let $L=\Delta-\nabla\varphi\cdot\nabla$ be a symmetric diffusion
operator with an invariant measure $d\mu=e^{-\varphi}dx$ on a
complete Riemannian manifold. In this paper we prove Li-Yau gradient
estimates for weighted elliptic equations on the complete manifold
with $|\nabla \varphi|\leq\theta$ and $\infty$-dimensional
Bakry-\'{E}mery Ricci curvature bounded below by some negative
constant. Based on this, we give an upper bound on the first
eigenvalue of the diffusion operator $L$ on this kind manifold,
and thereby generalize a Cheng's result on the Laplacian case
(Math. Z., 143 (1975) 289-297).
\end{abstract}
\maketitle

\section{Introduction and main result}
Given $(M,g)$ be an $n$-dimensional complete Riemannian manifold
with the Ricci curvature satisfying $Ric(g)\geq-(n-1)$,
Cheng (Theorem 4.2 in \cite{[Cheng]})
proved an upper bound of the first nontrivial eigenvalue of the
Laplacian
\[
\lambda_1\leq \frac{(n-1)^2}{4}.
\]

Later, the author \cite{[Wu]} extended this result to the case of
the diffusion operator with the $m$-dimensional Bakry-\'{E}mery
Ricci curvature bounded below.

\vspace{0.5em}

\noindent \textbf{Theorem A.} (Wu \cite{[Wu]}) \emph{Let
$(M^n,g)$ be an $n$-dimensional ($n\geq2$) complete Riemannian
manifold and $\varphi \in C^2(M^n)$ be a function. Assume that the
$m$-dimensional Bakry-\'{E}mery Ricci curvature satisfies
\[
Ric_{m,n}(L)\geq-(n-1).
\]
Then we have an upper bound estimate on the first nontrivial
eigenvalue for the diffusion operator
$L=\Delta-\nabla\varphi\cdot\nabla$
\[
\lambda_1\leq\frac{(m-1)(n-1)}{4}.
\]}

\vspace{0.5em}

\noindent In Theorem A, the $m$-dimensional Bakry-\'{E}mery Ricci
curvature (see \cite{[BE],[BQ1],[BQ2],[LD]}) is defined by
\[
Ric_{m,n}(L):=Ric +Hess(\varphi)-\frac{\nabla \varphi \otimes \nabla
\varphi}{m-n},
\]
where $Ric$ and $Hess$ denote the Ricci curvature and the Hessian of
the metric $g$, respectively, and where $m:=\mathrm{dim}_{BE}(L)\geq
n$ is called the Bakry-\'{E}mery dimension of $L$, which is a constant
and $m=n$ if and only if $\varphi$ is constant. When $\varphi$ is
constant, $Ric_{m,n}(L)=Ric$ and Theorem A reduces to the Cheng's
theorem by taking $m=n$. The essential tool in proving Theorem A is the
Li-Yau's gradient estimate trick \cite{[Li-Yau1]}, which are originated
by Yau \cite{[Yau]} (see also Cheng and Yau
\cite{[Cheng-Yau]}).

The $m$-dimensional Bakry-\'{E}mery Ricci curvature, mentioned
in Theorem A, has a natural extension to non-smooth metric measure
spaces \cite{[Lo-Vi],[St1],[St2]}. We refer to the survey
article \cite{[WW2]} for details on this tensor.
A remarkable feature for $m$-dimensional Bakry-\'{E}mery Ricci
curvature is that Laplacian comparison theorems hold for
$Ric_{m,n}(L)$ that look like the case of Ricci tensor in a
$m$-dimensional manifold \cite{[WW]} (see also
\cite{[FLZ],[LD],[Wu]}). This is why many geometric and topological
results for manifolds with a lower bound on the Ricci curvature can
be easily extended to smooth metric measure spaces with only a
$m$-dimensional Bakry-\'{E}mery Ricci curvature bounded below. For
discussions of this use, see for example Theorem 1.3 in
\cite{[FLZ]}, Theorem 5.1 in \cite{[LD]} and Theorem 1.1 in
\cite{[Wu]}.

For the $m$-dimensional Bakry-\'{E}mery Ricci curvature, we can
allow $m$ to be infinite. In this case this becomes
\[
Ric(L):=\lim_{m\to +\infty}Ric_{m,n}(L)=Ric+Hess(\varphi),
\]
which is called the $\infty$-dimensional Bakry-\'{E}mery Ricci
curvature. Bakry and \'{E}mery \cite{[BE]} extensively studied this
tensor and its relationship to diffusion processes. The
$\infty$-dimensional Bakry-\'{E}mery Ricci tensor also occurs
naturally in many different subjects, see e.g.
\cite{[FLZ],[FG],[Lott1],[Perelman],[Wylie]}. Recently, the
$\infty$-dimensional Bakry-\'{E}mery Ricci curvature has become an
important object of study in Riemannian geometry, in large part due
to the gradient Ricci soliton equation:
\begin{equation}\label{soliton}
Ric(L)=\rho g
\end{equation}
for some constant $\rho$, which plays an important role in the
Ricci flow. For the recent progress on Ricci solitons the
reader may refer to Cao's survey papers \cite{[Cao1],[Cao2]}.

As mentioned in Remark 1.2 (2) of \cite{[Wu]}, in the above
Theorem A, if we let $m=\infty$, then $\frac{(m-1)(n-1)}{4}=\infty$.
At this time, we do not obtain any useful upper bound on the
first eigenvalue when $Ric(L)\geq-(n-1)$ from Theorem A.
Naturally, we ask if we can extend Cheng's theorem to the case of
the diffusion operator $L$ with only the $\infty$-dimensional
Bakry-\'{E}mery Ricci curvature bounded below. This is not an
easy question to answer. One main difficulty is that we can not get a
proper local gradient estimate on Riemannian manifolds with only a
lower $\infty$-dimensional Bakry-\'{E}mery Ricci curvature bound,
since local gradient estimates are closely related to Laplacian
comparison theorems. However, classical Laplacian comparison
theorems can not be directly extended to the case for only a
lower bound on the $\infty$-dimensional Bakry-\'{E}mery Ricci
tensor. This is a main difference compared with the
$m$-dimensional Bakry-\'{E}mery tensor.

As many recent authors said, when dealing with the
$\infty$-dimensional Bakry-Emery Ricci curvature, many geometric and
topological results remain true under some assumptions of the
potential functions $\varphi$ (see e.g. \cite{[FLZ],[WW2],[WW]}).
Following this idea, in this paper if we assume additionally that
the functions $|\nabla \varphi|$ are bounded, we can derive the
desired gradient estimates under the $\infty$-dimensional
Bakry-\'{E}mery Ricci curvature bounded below, analogous to
\cite{[FLZ],[WW],[Wu]}. Furthermore, we can derive the following
useful upper bound on the first eigenvalue of $L$.
\begin{theorem}\label{mainthm}
Let $(M^n,g)$ be an $n$-dimensional ($n\geq2$) complete Riemannian
manifold and $\varphi \in C^2(M^n)$ be a function satisfying
$|\nabla\varphi|\leq\theta$ for some constant $\theta\geq 0$.
Assume that
\begin{equation}\label{ass1}
Ric(L)\geq-(n-1).
\end{equation}
Then we have an upper bound estimate
on the first nontrivial eigenvalue for the diffusion operator
$L=\Delta-\nabla\varphi\cdot\nabla$
\begin{equation}\label{equth14}
\lambda_1\leq\frac{1}{4}(n-1+\theta)^2.
\end{equation}
\end{theorem}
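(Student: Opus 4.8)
The plan is to follow the strategy behind Theorem A: produce a global Li-Yau type gradient estimate for positive solutions of the weighted eigenvalue equation and feed it into the variational description of the bottom of the spectrum. Here $\lambda_1$ denotes the bottom of the $L^2(M,d\mu)$-spectrum of $-L$, and a standard criterion (of Fischer-Colbrie and Schoen type) says that $Lu+\lambda u=0$ admits a positive solution $u$ on all of $M$ for every $\lambda\le\lambda_1$. Hence it suffices to prove that whenever $u>0$ solves $Lu=-\lambda u$, then $\lambda\le\frac14(n-1+\theta)^2$; applying this with $\lambda$ approaching $\lambda_1$ gives the theorem.

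Writing $f=\log u$ turns the equation into $Lf+|\nabla f|^2=-\lambda$, and I would analyze $|\nabla f|^2$ through the weighted Bochner formula
\[
\tfrac12 L|\nabla f|^2=|\mathrm{Hess}\,f|^2+\langle\nabla f,\nabla Lf\rangle+Ric(L)(\nabla f,\nabla f),
\]
inserting $Ric(L)\ge-(n-1)$ and $\nabla Lf=-\nabla|\nabla f|^2$. The decisive structural point is that the diffusion and Riemannian Laplacians differ by $\Delta f-Lf=\langle\nabla\varphi,\nabla f\rangle$, whose size is controlled by the hypothesis $|\nabla\varphi|\le\theta$; this is exactly the input that compensates for the absence of a Laplacian comparison theorem for the $\infty$-dimensional tensor. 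Localizing $\psi|\nabla f|^2$ by a cutoff $\psi$ supported in $B_p(2R)$, running the maximum principle, and letting $R\to\infty$ should produce a global bound on $|\nabla u|^2/u^2$ in terms of $\lambda$, $n-1$ and $\theta$.

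The hard part will be extracting the \emph{sharp} constant $\frac14(n-1+\theta)^2$. The crude Cauchy-Schwarz inequality $|\mathrm{Hess}\,f|^2\ge(\Delta f)^2/n$ only yields a bound of order $\frac{n(n-1)}4$, which already at $\theta=0$ overshoots Cheng's sharp value $\frac{(n-1)^2}4$ by the factor $n/(n-1)$. To recover sharpness one must exploit the radial structure of $\mathrm{Hess}\,f$ along minimal geodesics from $p$, which amounts to invoking the weighted mean-curvature comparison $Lr\le(n-1)\coth r+\theta$ that holds under $Ric(L)\ge-(n-1)$ and $|\nabla\varphi|\le\theta$; the equality model is the shifted horospherical solution $u\sim e^{-\frac{n-1+\theta}{2}r}$, which is precisely where the additive $\theta$ enters. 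Equivalently, and this is the cleanest route to the sharp constant, I would integrate that comparison to get the weighted volume growth $\mu(B_p(r))\lesssim e^{(n-1+\theta)r}$, so that the exponential growth rate is at most $n-1+\theta$, and then invoke a Brooks-type test-function estimate $\lambda_1\le\frac14\big(\limsup_{r\to\infty}\tfrac{\log\mu(B_p(r))}{r}\big)^2$ (the finite-measure case forcing $\lambda_1=0$ trivially). In every version the crux is the same: establishing the weighted gradient/comparison estimate under only an $\infty$-dimensional curvature bound, with $|\nabla\varphi|\le\theta$ as the ingredient that closes the Bochner argument.
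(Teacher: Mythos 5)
Your reduction to positive global solutions of $Lu=-\lambda u$ via the Fischer--Colbrie--Schoen/Cheng--Yau characterization of the bottom of the spectrum is exactly the (implicit) starting point of the paper, and your first route --- weighted Bochner formula for $|\nabla\log u|^2$, cutoff, maximum principle, $R\to\infty$ --- is the paper's actual proof, carried out in detail in the appendix. However, the step you flag as ``the hard part'' is resolved by a different mechanism than the one you propose. The sharp constant does \emph{not} come from any radial structure of $\mathrm{Hess}\,f$ or from feeding the mean-curvature comparison into the Bochner identity; the comparison $Lr\le(n-1)\coth r+\theta$ enters only to control $L\phi$ for the cutoff, an $O(R^{-1})$ error that disappears in the limit. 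What actually produces the constant is an algebraic refinement: choose an orthonormal frame with $e_1$ parallel to $\nabla h$ ($h=\log u$), use $h_{ij}^2\ge h_{11}^2+2\sum_{\alpha\ge2}h_{1\alpha}^2+\tfrac{1}{n-1}(\Delta h-h_{11})^2$, substitute $\Delta h=-|\nabla h|^2-\lambda+\langle\nabla\varphi,\nabla h\rangle$, and split off the $\langle\nabla\varphi,\nabla h\rangle$ term with a free parameter $m_0>n$ via $(a-b)^2\ge\tfrac{a^2}{1+t}-\tfrac{b^2}{t}$ with $t=\tfrac{m_0-n}{n-1}$. Equivalently (and this is the paper's short proof), $|\nabla\varphi|\le\theta$ upgrades $Ric(L)\ge-(n-1)$ to $Ric_{m_0,n}(L)\ge-(n-1)-\tfrac{\theta^2}{m_0-n}$, so the earlier $m$-dimensional result applies verbatim. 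Either way one gets $\lambda\le\tfrac14\bigl[(m_0-1)(n-1)+\tfrac{m_0-1}{m_0-n}\theta^2\bigr]$, and optimizing $m_0=n+\theta$ yields $\tfrac14(n-1+\theta)^2$. Without this frame decomposition and $m_0$-optimization (or the equivalent passage through $Ric_{m,n}$), your first route is not yet a proof.

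Your second route --- integrating the weighted Laplacian comparison to get $\mu(B_p(r))\lesssim e^{(n-1+\theta)r}$ and invoking a Brooks-type bound $\lambda_1\le\tfrac14\bigl(\limsup_{r\to\infty}r^{-1}\log\mu(B_p(r))\bigr)^2$ --- is a correct and genuinely different argument, but it is precisely the volume-comparison-plus-variational-principle proof that the paper attributes to Munteanu--Wang and Su--Zhang; the paper's explicitly stated aim is to give the Li--Yau gradient-estimate proof instead, which has the added benefit of the pointwise bound on $|\nabla\log u|$ used later for splitting-type applications.
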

When $\varphi$ is constant, we can take $\theta=0$, and our theorem reduces to
Cheng's theorem \cite{[Cheng]}. The assumption $|\nabla\varphi|\leq\theta$ guarantees
that the weighted comparison theorem \cite{[WW]} holds when $Ric(L)$ is
bounded below. Su-Zhang \cite{[SuZh]} provided a simple example to explain
that the assumption $|\nabla\varphi|$ is necessary. Recently, Munteanu-Wang
\cite{[MuWa2]} and Su-Zhang \cite{[SuZh]} also independently obtained
this eigenvalue estimate by means of the weighted volume comparison
theorem and the eigenvalue variational principle. We emphasize that
our proof relies on Li-Yau's gradient estimates (see Theorem \ref{L301}
in Sect. \ref{sec2}).

\begin{remark}\label{rem11}
We remark that there exists an example which shows that estimate
\eqref{equth14} is sharp. Consider $M=\mathbb{R}\times N^{n-1}$ endowed
with the warped product metric
\[
ds_M^2=dt^2+\exp(-2t)ds_N^2.
\]
If $\{\bar{e}_\alpha\}$ for $\alpha=2,...,n$ form an orthonormal basis of
the tangent space of $N$, then $e_1=\frac{\partial}{\partial t}$ together
with $\{e_\alpha=\exp(-t)\bar{e}_\alpha\}$ form an orthonormal basis for
the tangent space of $M$. If $\varphi(t,x)=\theta t$ for
$(t,x)\in\mathbb{R}\times N^{n-1}$, by the standard computation, then the
$\infty$-dimensional Bakry-\'{E}mery Ricci curvature of $M$ is
\[
Ric(L)_{1j}=Ric_{M,1j}+\varphi_{1j}
=-(n-1)\delta_{1j}
\]
and
\[
Ric(L)_{\alpha\beta}=\exp(2t)Ric_{N,\alpha\beta}-(n-1)\delta_{\alpha\beta}.
\]
Clearly, $Ric_N\geq 0$ if and only if $Ric(L)\geq-(n-1)$.
Moreover, we \emph{claim} that $\lambda_1=\frac{(n-1+\theta)^2}{4}$.
Indeed, we choose function $f=\exp\left(\frac{n-1+\theta}{2}t\right)$,
and have that
\[
\Delta_\varphi f=\frac{d^2f}{dt^2}-(n-1)\frac{df}{dt}
-\frac{d\varphi}{dt}\cdot\frac{df}{dt}
=-\frac{(n-1+\theta)^2}{4}f,
\]
since $\Delta=\frac{\partial^2}{\partial t^2}-(n-1)\frac{\partial}{\partial t}
+\exp(2t)\Delta_N$. From this we conclude that
$\lambda_1=\frac{(n-1+\theta)^2}{4}$ as claimed.
\end{remark}

\begin{remark}\label{rem11a}
If $Ric(L)\equiv-(n-1)$ with $|\nabla\varphi|\leq\theta$,
then $M$ must be Einstein by \cite{[Pigoli]}. If $M$ is the gradient expanding Ricci soliton:
$Ric+\nabla\nabla\varphi=\rho g$, $\rho<0$
(see \cite{[Ha]} or \cite{[Cao0]}), we can derive that
$|\nabla\varphi|^2=-R+2\rho\varphi$. If we assume
$\varphi\geq c$ for some constant $c\leq n/2$, then
\[
|\nabla\varphi|^2\leq-n\rho+2\rho\varphi\leq-n\rho+2c\rho,
\]
where we used $R\geq n\rho$ \cite{[Chen]}. Hence by \cite{[Pigoli]},
we conclude that $M$ is Einstein.
\end{remark}

\begin{remark}\label{rem11b}
In an earlier version of this result, the author proved that
for any constant $m_0\geq n$, the first nontrivial eigenvalue
satisfies
\[
\lambda_1\leq\frac{(m_0-1)(n-1)+\frac{m_0-1}{m_0-n}\theta^2}{4}.
\]
This estimate can be optimized by estimate \eqref{equth14},
pointed out by the referee. Since $m_0$ is arbitrary, letting
$m_0=n+k\theta$, where $k>0$ is a free parameter, then
\[
(m_0-1)(n-1)+\frac{m_0-1}{m_0-n}\theta^2
=(n-1+k\theta)(n-1+k^{-1}\theta)\geq(n-1+\theta)^2,
\]
where $m_0=n$ if and only if $\theta=0$.
\end{remark}

The structure of this paper is organized as follows. In Sect.
\ref{sec2}, we derive Li-Yau gradient estimates, i.e., Theorem
\ref{L301}. The proof makes use of the author's previous result in
\cite{[Wu]}, combining the concept of Bakry-\'{E}mery tensor
used in \cite{[LD]}. In Sect. \ref{sec5}, we apply Theorem
\ref{L301} to the setting of gradient Ricci solitons.
Finally in appendix, we give a detailed proof of Theorem
\ref{L301} though the proof method nearly follows from that
of Theorem 2.1 in \cite{[Wu]}. We include it because we
feel it might be useful in other applications.


\section{Some basic gradient estimates}\label{sec2}
In this section, we will prove an important gradient estimate (see
\eqref{lemm1b} below), which also implies the proof of Theorem
\ref{mainthm}.

\begin{theorem}\label{L301}
Let $(M^n,g)$ be an $n$-dimensional ($n\geq2$) complete Riemannian
manifold and $\varphi \in C^2(M^n)$ be a function satisfying
$|\nabla \varphi|\leq\theta$ for some constant $\theta\geq 0$.
Assume that
\[
Ric(L)\geq-(n-1)K
\]
for some constant $K\geq 0$. Let $f$ be a positive function defined
on the geodesic ball $B_p(2R)\subset M^n$ satisfying
\[
Lf=-\lambda f
\]
for some constant $\lambda\geq 0$. Then for any constant $m_0\geq
n$, there exists a constant $C$ depending on $m_0$ and $n$ such
that
\begin{equation}\label{lemm1a}
\frac{|\nabla f(x)|^2}{f^2(x)} \leq
\frac{[2(m_0-1)(n-1)+\epsilon]K}{(2-\epsilon)}
+\frac{2(m_0-1)\theta^2}{(m_0-n)(2-\epsilon)}
+C\left(\frac{1+\epsilon^{-1}}{R^2}+\lambda\right)
\end{equation}
for all $x\in B_p(R)$ and for any $0<\epsilon<2$.
Furthermore, if the positive function $f$ is defined on $(M^n,g)$,
then for any constant $m_0\geq n$, we have
\begin{equation}
\begin{aligned}\label{lemm1b}
|\nabla \ln f|^2&\leq\frac{(m_0-1)(n-1)K}{2}-\lambda
+\frac{m_0-1}{2(m_0-n)}\theta^2\\
&\quad+\sqrt{\frac{\left[(m_0-1)(n-1)K
+\frac{m_0-1}{m_0-n}\theta^2\right]^2}{4}
-\left[(m_0-1)(n-1)K+\frac{m_0-1}{m_0-n}\theta^2\right]\lambda}
\end{aligned}
\end{equation}
and
\begin{equation}\label{eigeestup}
\lambda\leq\frac{(m_0-1)(n-1)K+\frac{m_0-1}{m_0-n}\theta^2}{4}.
\end{equation}
\end{theorem}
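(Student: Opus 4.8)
The plan is to transfer the estimate from the $\infty$-dimensional Bakry-\'{E}mery curvature to the $m_0$-dimensional one, where the Bochner machinery and the comparison theory behave like those of an honest $m_0$-dimensional space, and then to run a refined Li-Yau argument. The key reduction is the algebraic identity $Ric_{m_0,n}(L)=Ric(L)-\frac{\nabla\varphi\otimes\nabla\varphi}{m_0-n}$: pairing with an arbitrary vector and using $Ric(L)\geq-(n-1)K$ together with $|\nabla\varphi|\leq\theta$ gives the uniform lower bound $Ric_{m_0,n}(L)\geq-\kappa\,g$, where $\kappa:=(n-1)K+\frac{\theta^2}{m_0-n}$. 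This is precisely the step where the hypothesis $|\nabla\varphi|\leq\theta$ is indispensable: without it the subtracted term $\frac{\nabla\varphi\otimes\nabla\varphi}{m_0-n}$ need not be bounded, and the finite-dimensional curvature could fail to possess any lower bound at all.

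Next I would set $u=\ln f$ and record the eigenfunction relation. Since $Lf=-\lambda f$ and $Le^u=e^u(Lu+|\nabla u|^2)$, one gets $Lu=-(\lambda+G)$, where $G:=|\nabla u|^2=|\nabla\ln f|^2$. The weighted Bochner formula reads $\frac12 L|\nabla u|^2=|\nabla^2u|^2+\langle\nabla u,\nabla Lu\rangle+Ric(L)(\nabla u,\nabla u)$. At a critical point of $G$ one has $\nabla^2u(\,\cdot\,,\nabla u)=0$, so $\nabla u$ is a null direction of $\nabla^2u$ and tracing over the remaining $n-1$ directions yields the \emph{refined} Kato inequality $|\nabla^2u|^2\geq\frac{(\Delta u)^2}{n-1}$; combining this with $\Delta u=Lu+\langle\nabla\varphi,\nabla u\rangle$ and the Cauchy-Schwarz split $\frac{(\Delta u)^2}{n-1}\geq\frac{(Lu)^2}{m_0-1}-\frac{\langle\nabla\varphi,\nabla u\rangle^2}{m_0-n}$ (note $(m_0-1)-(n-1)=m_0-n$) absorbs the extra term into the curvature reduction above. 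The upshot is that at every critical point of $G$ one obtains $\frac12 LG\geq\frac{(\lambda+G)^2}{m_0-1}-\kappa G$, which is the whole engine. This also explains why $m_0-1$ rather than $m_0$ appears in the conclusion: the improvement is exactly the refined trace over the $n-1$ orthogonal directions.

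For the global statements \eqref{lemm1b} and \eqref{eigeestup} I would apply this at a maximum point of $G$, where $LG\leq0$, to get $(\lambda+G)^2\leq(m_0-1)\kappa\,G$; writing $\kappa_0:=(m_0-1)\kappa$ and solving the quadratic $G^2+(2\lambda-\kappa_0)G+\lambda^2\leq0$ reproduces exactly \eqref{lemm1b}, while nonnegativity of the discriminant $\kappa_0^2-4\kappa_0\lambda$ yields $\lambda\leq\kappa_0/4$, which is \eqref{eigeestup}. For the local estimate \eqref{lemm1a} I would instead multiply $G$ by a standard Li-Yau/Cheng-Yau cutoff $\psi$ supported in $B_p(2R)$ and apply the maximum principle to $\psi G$. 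The $m_0$-dimensional weighted Laplacian comparison theorem---which, unlike its $\infty$-dimensional counterpart, requires no further hypothesis on $\varphi$ once $Ric_{m_0,n}(L)$ is bounded below---controls $L\psi$, and the contributions that do not vanish at the maximum of $\psi G$ are absorbed by Young's inequality, producing the free parameter $0<\epsilon<2$ and the stated constants.

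The main obstacle is the local estimate \eqref{lemm1a} rather than the clean global statement. At a maximum of the product $\psi G$ the gradient of $G$ no longer vanishes, so the refined Kato inequality ceases to be exact: one must retain the radial Hessian term $\nabla^2u(\nabla u,\nabla u)/|\nabla u|^2$ and balance it, together with the cutoff term $G\,L\psi$ and the cross term $\langle\nabla u,\nabla G\rangle$, against the curvature contribution, which is what forces the parameter $\epsilon$ and the particular constants displayed in \eqref{lemm1a}. Since these manipulations parallel the author's Theorem 2.1 in \cite{[Wu]} almost verbatim once the curvature has been reduced to the single $m_0$-dimensional bound $\kappa$, I would relegate the detailed cutoff computation to the appendix and invoke \eqref{lemm1a} with $R\to\infty$ only to guarantee that $G$ is bounded, which in turn legitimizes passing to a maximum of $G$ in the global argument.
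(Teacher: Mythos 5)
Your proposal follows essentially the same route as the paper: the reduction $Ric_{m_0,n}(L)\geq-\bigl[(n-1)K+\tfrac{\theta^2}{m_0-n}\bigr]g$ is exactly the referee's quick proof given in Section \ref{sec2}, and your Cauchy--Schwarz split of $\frac{(\Delta u)^2}{n-1}$ into $\frac{(Lu)^2}{m_0-1}-\frac{\langle\nabla\varphi,\nabla u\rangle^2}{m_0-n}$ is the identical inequality driving the appendix. The one point to repair is the passage to the global statements \eqref{lemm1b} and \eqref{eigeestup}: boundedness of $G$ does not make its supremum attained on a noncompact manifold, so you should, as the paper does, keep the full quadratic inequality at the maximum of the compactly supported $\psi G$ and let $R\to\infty$ (with the ratio $\sigma(R)\to 1$) rather than arguing at a maximum of $G$ itself.
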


\begin{remark}\label{re22}(1) Similar to Remark \ref{rem11b}, letting
$m_0=n+k\theta$, where $k>0$ is a free parameter, then
\begin{equation*}
\begin{aligned}
(m_0-1)(n-1)K+\frac{m_0-1}{m_0-n}\theta^2
&=\left[(n-1)\sqrt{K}+k\sqrt{K}\theta\right]
\left[(n-1)\sqrt{K}+\frac{\theta}{k\sqrt{K}}\right]\\
&\geq\left[(n-1)\sqrt{K}+\theta\right]^2,
\end{aligned}
\end{equation*}
where $m_0=n$ if and only if $\theta=0$. Therefore estimates
\eqref{lemm1b} and \eqref{eigeestup} can be respectively
optimized by
\[
|\nabla \ln f|^2\leq\frac{\left[(n-1)\sqrt{K}+\theta\right]^2}{2}-\lambda
+\sqrt{\frac{\left[(n-1)\sqrt{K}+\theta\right]^4}{4}-\left[(n-1)\sqrt{K}+\theta\right]^2\lambda}
\]
and
\[
\lambda\leq\frac{\left[(n-1)\sqrt{K}+\theta\right]^2}{4}.
\]
\end{remark}

\begin{remark}\label{re22b}
If $Ric(L)\geq0$, by letting $K=0$ and then letting $m_0\to\infty$ in
\eqref{eigeestup}, we immediately obtain $\lambda_1\leq\frac{\theta^2}{4}$,
which has been proved by Munteanu and Wang in \cite{[MuWa]} under
some weak assumption.
\end{remark}

\begin{remark}\label{re22b}
Our Li-Yau gradient estimates can be used to prove splitting type theorems for
complete manifolds with $\infty$-dimensional Bakry-\'{E}mery Ricci
curvature. This was done by the author in a separated paper \cite{[Wusp]}.
Recently, Munteanu and Wang \cite{[MuWa2]} obtained similar gradient
estimates for weighted harmonic functions ($\lambda=0$ in Theorem \ref{L301})
under some oscillation of function $\varphi$. Their proof is a mixture of
the Bochner identity and the DeGiorgi-Nash-Moser theory.
\end{remark}

\begin{proof}[Proof of Theorem \ref{L301}]
We moved our original proof using the Li-Yau gradient estimate method
to the appendix because we feel it seems to be tedious.
Here we use a simple and direct proof, which was pointed by the referee.
We are very grateful to the referee for his valuable comment which
leads us to give this quick proof. According to Section 1.5 in
X.-D. Li's paper \cite{[LD]}, the conditions $Ric(L)\geq-(n-1)K$
and $|\nabla\varphi|\leq \theta$ imply that the $m$-dimensional
Bakry-Emery Ricci curvature is bounded from below by a new constant
for any $m_0>n$, i.e.,
\[
Ric_{m,n}(L)\geq-(n-1)\left[K+\frac{\theta^2}{(m_0-n)(n-1)}\right]
:=-(n-1)\tilde{K}.
\]
Using the gradient estimate trick developed in \cite{[LD]}, from
Theorem 2.1 in \cite{[Wu]}, we know that there exists a
constant $C$ depending on $m_0$ and $n$ such that
\[
\frac{|\nabla f(x)|^2}{f^2(x)}
\leq\frac{[2(m_0-1)(n-1)+\epsilon]\tilde{K}}{2-\epsilon}
+C\cdot\left(\frac{1+\epsilon^{-1}}{R^2}+\lambda\right)
\]
for all $x\in B_p(R)$ and for any $0<\epsilon<2$. If the
positive function $f$ is defined on $(M^n,g)$, then
\[
|\nabla \ln f|^2\leq\frac{(m-1)(n-1)\tilde{K}}{2}-\lambda
+\sqrt{\frac{(m_0-1)^2(n-1)^2\tilde{K}^2}{4}-(m_0-1)(n-1)\tilde{K}\lambda}
\]
and
\[
\lambda\leq\frac{(m_0-1)(n-1)\tilde{K}}{4}.
\]
Substituting $\tilde{K}=K+\frac{\theta^2}{(m_0-n)(n-1)}$ into the
above inequalities yields \eqref{lemm1a},
\eqref{lemm1b} and \eqref{eigeestup}.
\end{proof}

Below, Theorem \ref{mainthm} can be easily obtained by Theorem \ref{L301}.
\begin{proof}[Proof of Theorem \ref{mainthm}]
Let $(M^n,g)$ be an $n$-dimensional ($n\geq2$) complete Riemannian
manifold and $\varphi \in C^2(M^n)$ be a function satisfying
$|\nabla \varphi|\leq\theta$. Assume that the $\infty$-dimensional
Bakry-\'{E}mery Ricci curvature satisfies
\[
Ric(L)\geq-(n-1).
\]
Let $\lambda_1$ be the first nontrivial eigenvalue of the diffusion
operator $L=\Delta-\nabla\varphi\cdot\nabla$. Hence $\lambda_1$
satisfies the equation
\[
L f=-\lambda_1 f,
\]
where $f$ is the eigenfunction. Now let $\lambda=\lambda_1$
and $K=1$ in Theorem \ref{L301}. Then
estimate \eqref{eigeestup} and Remark \ref{re22}
give the complete proof of Theorem \ref{mainthm}.
\end{proof}

\section{Applications of Theorem \ref{L301} to Ricci solitons}\label{sec5}
In this section, we discuss the first eigenvalues of the diffusion operator
$L$ on gradient Ricci solitons by Theorem \ref{L301}. Recall that a
complete smooth Riemannian manifold $(M^n,g)$ is called a gradient Ricci
soliton if equation \eqref{soliton} holds. The gradient Ricci solitons are
called shrinking, steady and expanding accordingly when $\rho>0$,
$\rho=0$ and $\rho<0$.

On the gradient steady Ricci solitons $Ric+\nabla\nabla \varphi=0$,
there exists a positive constant $a>0$ (see \cite{[Ha]} or \cite{[Cao0]})
such that
\begin{equation}\label{norm}
R+|\nabla \varphi|^2=a^2.
\end{equation}
\begin{theorem}\label{401a}
On gradient steady Ricci solitons $Ric+\nabla\nabla \varphi=0$,
normalized as in \eqref{norm}, the first eigenvalue of diffusion operator $L$
satisfies $\lambda_1\leq {a^2}/{4}$.
\end{theorem}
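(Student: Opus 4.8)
The plan is to reduce Theorem \ref{401a} to an application of Theorem \ref{L301} with the curvature parameter $K=0$. On a gradient steady Ricci soliton the defining equation $Ric+\nabla\nabla\varphi=0$ says precisely that the $\infty$-dimensional Bakry-\'{E}mery Ricci curvature vanishes, $Ric(L)=Ric+Hess(\varphi)=0$. In particular $Ric(L)\geq 0$, so the lower curvature bound of Theorem \ref{L301} holds with $K=0$.

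The second ingredient is a uniform bound on $|\nabla\varphi|$. Here I would invoke the fact that a complete gradient steady Ricci soliton has nonnegative scalar curvature, $R\geq 0$; this is the $\rho=0$ case of the inequality $R\geq n\rho$ from \cite{[Chen]} already quoted in Remark \ref{rem11a}. Combining $R\geq 0$ with the normalization \eqref{norm}, namely $R+|\nabla\varphi|^2=a^2$, yields the pointwise estimate $|\nabla\varphi|^2\leq a^2$, hence $|\nabla\varphi|\leq a$. Thus we may take $\theta=a$ in Theorem \ref{L301}.

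With $K=0$ and $\theta=a$, the eigenvalue estimate \eqref{eigeestup} reads
\[
\lambda_1\leq\frac{1}{4}\cdot\frac{m_0-1}{m_0-n}\,a^2
\]
for every $m_0>n$. Since $\frac{m_0-1}{m_0-n}\to 1$ as $m_0\to\infty$, passing to the limit gives $\lambda_1\leq a^2/4$. Equivalently, the optimized estimate in Remark \ref{re22}, evaluated at $K=0$ and $\theta=a$, yields $\lambda_1\leq\theta^2/4=a^2/4$ directly, which is the desired bound.

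The main obstacle I anticipate is not the application of Theorem \ref{L301}, which is essentially immediate once the two numerical inputs $K=0$ and $\theta=a$ are in place, but rather the justification of the pointwise gradient bound $|\nabla\varphi|\leq a$. This rests on the nonnegativity of the scalar curvature for complete steady solitons, so one should check that the argument of \cite{[Chen]} applies under the completeness assumption already imposed and furnishes $R\geq 0$ globally on $M$, so that $\theta=a$ may legitimately be fed into Theorem \ref{L301}.
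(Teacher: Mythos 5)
Your proposal is correct and follows essentially the same route as the paper: use $R\geq 0$ for complete steady solitons (from \cite{[Chen]}) together with the normalization \eqref{norm} to get $|\nabla\varphi|\leq a$, then apply \eqref{eigeestup} with $K=0$ and let $m_0\to\infty$. No gaps.
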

\begin{remark}
A stronger result for gradient steady Ricci solitons has been proved by
Munteanu and Wang (Proposition 2.4 in \cite{[MuWa]})
using a different approach.
\end{remark}

\begin{proof}
Since $R\geq 0$ for any complete gradient steady Ricci soliton
\cite{[Chen]}, by \eqref{norm} we have $|\nabla \varphi|\leq a$.
Hence by Theorem \ref{L301}, in \eqref{eigeestup}, letting
$K=0$ and $m_0\to\infty$, we have the desired result.
\end{proof}

On the gradient shrinking Ricci solitons $Ric+\nabla\nabla \varphi=\rho g$, $\rho>0$,
there exists a constant $C$ (see \cite{[Ha]} or \cite{[Cao0]}) such that
\[
R+|\nabla \varphi|^2-2\rho \varphi=C.
\]
We normalize $\varphi$ (by adding a constant) so that $C=0$. Hence
\begin{equation}\label{norm2}
|\nabla\varphi|^2=-R+2\rho\varphi.
\end{equation}
If we let $\varphi\leq b$ for some positive constant $b$,
then we have
\begin{theorem}\label{401b}
On gradient shrinking Ricci solitons $Ric+\nabla\nabla \varphi=\rho g$, $\rho>0$,
normalized as in \eqref{norm2}, if $\varphi\leq b$ for some positive constant $b$, then
the first eigenvalue of diffusion operator $L$ satisfies $\lambda_1\leq \frac{\rho b}{2}$.
\end{theorem}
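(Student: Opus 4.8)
The plan is to reduce Theorem \ref{401b} to the eigenvalue inequality \eqref{eigeestup} of Theorem \ref{L301}, exactly in the spirit of the proof of Theorem \ref{401a}. All I need to do is verify the two hypotheses of Theorem \ref{L301}: a lower bound on $Ric(L)$ and a uniform bound on $|\nabla\varphi|$, and then optimize in the parameter $m_0$.

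First I would dispatch the curvature hypothesis. On a gradient shrinking Ricci soliton the defining equation $Ric+\nabla\nabla\varphi=\rho g$ with $\rho>0$ gives at once $Ric(L)=Ric+Hess(\varphi)=\rho g\geq 0$, so the assumption of Theorem \ref{L301} holds with $K=0$. Next I would produce the constant $\theta$ with $|\nabla\varphi|\leq\theta$. Using the normalization \eqref{norm2}, namely $|\nabla\varphi|^2=-R+2\rho\varphi$, together with the nonnegativity of the scalar curvature of a complete gradient shrinking soliton, $R\geq 0$ (Chen \cite{[Chen]}), I get $2\rho\varphi=|\nabla\varphi|^2+R\geq 0$, hence $\varphi\geq 0$; combining this with the hypothesis $\varphi\leq b$ yields
\[
|\nabla\varphi|^2=-R+2\rho\varphi\leq 2\rho\varphi\leq 2\rho b.
\]
Thus $|\nabla\varphi|\leq\sqrt{2\rho b}=:\theta$ uniformly on $M$.

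Finally I would feed $K=0$ and $\theta=\sqrt{2\rho b}$ into \eqref{eigeestup}, which with $K=0$ reads $\lambda_1\leq \tfrac{1}{4}\cdot\tfrac{m_0-1}{m_0-n}\theta^2$ for every $m_0>n$. Letting $m_0\to\infty$, so that $\tfrac{m_0-1}{m_0-n}\to 1$, I obtain $\lambda_1\leq\tfrac{\theta^2}{4}=\tfrac{2\rho b}{4}=\tfrac{\rho b}{2}$, which is the claimed bound.

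I expect the only genuine content to lie in the input $R\geq 0$ and the one-sided bound $\varphi\leq b$: the eigenvalue machinery is a turnkey application of Theorem \ref{L301}, so the real work is checking that the soliton normalization forces $|\nabla\varphi|$ to be globally bounded. On a noncompact shrinker one should also confirm that the global version of the gradient estimate applies to the eigenfunction on all of $(M^n,g)$, but this is precisely the regime in which \eqref{eigeestup} is stated, so no additional obstacle arises.
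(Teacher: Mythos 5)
Your argument is correct and coincides with the paper's own proof: both use $R\geq 0$ from Chen together with the normalization \eqref{norm2} and $\varphi\leq b$ to get $|\nabla\varphi|\leq\sqrt{2\rho b}$, then apply \eqref{eigeestup} with $K=0$ and let $m_0\to\infty$. The extra observation that $\varphi\geq 0$ is harmless but not needed.
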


\begin{proof}
Since $R\geq 0$ for any gradient shrinking Ricci soliton \cite{[Chen]}, by
\eqref{norm2} we have $|\nabla\varphi|^2\leq 2\rho\varphi$. Since
we assume $\varphi\leq b$, $|\nabla \varphi|\leq \sqrt{2\rho b}$.
Hence by Theorem \ref{L301}, in \eqref{eigeestup}, letting
$K=0$ and $m_0\to\infty$, we have $\lambda_1\leq \frac{\rho b}{2}$.
\end{proof}
\begin{remark}
As we all know, for complete noncompact gradient shrinking Ricci solitons,
function $\varphi$ is quadratic growth of the distance function
unless it is trivial \cite{[Cao2]}. Hence our theorem is only used to
the compact case.
\end{remark}

\begin{remark}
Recently, A. Futaki, H.-Z. Li and X.-D. Li \cite{[FLL]} proved a lower 
bound estimate for the first non-zero eigenvalue of the diffusion 
operator on compact Riemannian manifolds. As an application, 
they derived a lower bound estimate for the diameter of compact 
gradient shrinking Ricci solitons.
\end{remark}

If we assume $\varphi>n/2$ on compact shrinking Ricci solitons,
we also have an upper bound of the first eigenvalue. Indeed,
tracing shrinking Ricci solitons yields
\[
R+\Delta \varphi=n\rho.
\]
Combining this with \eqref{norm2} we have
\[
\Delta_\varphi \varphi=n\rho-2\rho\varphi.
\]
Letting $\tilde{\varphi}=\varphi-n/2>0$, then
\[
\Delta_{\varphi} \tilde{\varphi}=-2\rho\tilde{\varphi}.
\]
Hence by the definition of $\lambda_1$, we conclude that $\lambda_1\leq2\rho$.

\section{Appendix}
In this section we shall give a detailed proof of Theorem \ref{L301}.
The proof nearly follows from that of Theorem 2.1 in \cite{[Wu]}.
We include it because we feel it might be useful in other applications.
For example, the course of proving gradient estimate is an important
step for proving splitting type theorems with $\infty$-dimensional
Bakry-\'{E}mery Ricci curvature (see \cite{[Wusp]}).
\begin{proof}[Detailed proof of Theorem \ref{L301}]
\textbf{Step 1.} The proof spirit is the same as the arguments
used in the proofs of Lemma 2.1, Theorem 2.2 and Theorem 1.3 in
\cite{[LD]}. Here our proof exactly follows from that
of Theorem 2.1 in \cite{[Wu]} (see also \cite{[Li2]})
with little modification. We firstly prove the following
inequality \eqref{lemmin1}. Define $h:=\ln f$. Then
$Lh=-|\nabla h|^2-\lambda$. Direct calculation shows that
\begin{equation}\label{jisuan2}
L|\nabla h|^2
=2h_{ij}^2+2(R_{ij}+\nabla^2\varphi)h_ih_j-2\langle\nabla h,\nabla
|\nabla h|^2\rangle.
\end{equation}
Choose a local orthonormal frame $\{e_1, e_2,...,e_n\}$ near any
such given point so that at the given point $\nabla h=|\nabla
h|e_1$. Then we can write
\begin{equation}
\begin{aligned}\label{jisuan3}
\left|\nabla\left|\nabla h\right|^2\right|^2=4\sum^n_{j=1}
\left(\sum^n_{i=1}h_ih_{ij}\right)^2
=4h^2_1\cdot\sum^n_{i=1}h^2_{1i}=4\left|\nabla
h\right|^2\cdot\sum^n_{i=1}h^2_{1i}.
\end{aligned}
\end{equation}

On the other hand, we carefully estimate the term $h_{ij}^2$ in
\eqref{jisuan2}. Notice
\begin{equation*}
\begin{aligned}
h_{ij}^2&\geq h^2_{11}+2\sum^n_{\alpha=2}h^2_{1\alpha}
+\sum^n_{\alpha=2}h^2_{\alpha\alpha}\\
&\geq h^2_{11}+2\sum^n_{\alpha=2}h^2_{1\alpha}
+\frac{1}{n-1}\left(\sum^n_{\alpha=2}h_{\alpha\alpha}\right)^2\\
&=h^2_{11}+2\sum^n_{\alpha=2}h^2_{1\alpha}
+\frac{1}{n-1}\left(\Delta h-h_{11}\right)^2\\
&=h^2_{11}+2\sum^n_{\alpha=2}h^2_{1\alpha}
+\frac{1}{n-1}\left(\left|\nabla h\right|^2+\lambda+h_{11}-\varphi_ih_i\right)^2\\
&\geq h^2_{11}+2\sum^n_{\alpha=2}h^2_{1\alpha}
+\frac{1}{n-1}\left[\frac{(|\nabla
h|^2+\lambda+h_{11})^2}{1+\frac{m_0-n}{n-1}}
-\frac{(\varphi_ih_i)^2}{\frac{m_0-n}{n-1}}\right]
\end{aligned}
\end{equation*}
for any constant $m_0(\geq n)$. Since $|\nabla \varphi|\leq \theta$, we have
\begin{equation}\label{jisuan4}
h_{ij}^2\geq\frac{m_0}{m_0-1}\sum^n_{i=1}h^2_{1i}+\frac{(|\nabla
h|^2+\lambda)^2}{m_0-1}+\frac{2h_{11}(|\nabla h|^2+\lambda)}{m_0-1}
-\frac{\theta^2 |\nabla h|^2}{m_0-n}.
\end{equation}
Note that $2h_{11}=\langle\nabla|\nabla h|^2,\nabla h\rangle\cdot|\nabla
h|^{-2}$. Substituting this into \eqref{jisuan4} yields
\begin{equation}\label{jisuan6}
h_{ij}^2\geq\frac{m_0}{m_0-1}\sum^n_{i=1}h^2_{1i}+\frac{(|\nabla
h|^2+\lambda)^2}{m_0-1}+\frac{(|\nabla
h|^2+\lambda)}{m_0-1}\cdot\frac{\langle\nabla|\nabla h|^2,\nabla
h\rangle}{|\nabla h|^2}-\frac{\theta^2 |\nabla h|^2}{m_0-n}.
\end{equation}
Putting \eqref{jisuan6}, \eqref{jisuan3} and \eqref{jisuan2}
together, we deduce
\begin{equation*}
\begin{aligned}
L\left|\nabla h\right|^2&\geq\frac{2m_0}{m_0-1}\sum^n_{i=1}h^2_{1i}
+\frac{2(|\nabla h|^2+\lambda)^2}{m_0-1}+\frac{2(|\nabla
h|^2+\lambda)}{m_0-1}\cdot\frac{\langle\nabla|\nabla h|^2,\nabla
h\rangle}{|\nabla h|^2}\\
&\,\,\,\,\,\,-\frac{2\theta^2 |\nabla
h|^2}{m_0-n}+2\left(R_{ij}+\nabla_i\nabla_j\varphi\right)h_ih_j
-2\left\langle\nabla h,\nabla|\nabla h|^2\right\rangle\\
&=\frac{m_0}{2(m_0-1)}\frac{|\nabla|\nabla h|^2|^2}{|\nabla
h|^2}+\frac{2(|\nabla
h|^2+\lambda)^2}{m_0-1}+2Ric(L)(\nabla h,\nabla h)\\
&\,\,\,\,\,\,-\frac{2\theta^2 |\nabla h|^2}{m_0-n}+\left[\frac{2
\lambda}{(m_0-1)|\nabla
h|^2}-\frac{2m_0-4}{m_0-1}\right]\cdot\left\langle\nabla|\nabla
h|^2,\nabla h\right\rangle.
\end{aligned}
\end{equation*}
Using $Ric(L)\geq-(n-1)K$, the function $h:=\ln f$ satisfies
\begin{equation}
\begin{aligned}\label{lemmin1}
L|\nabla h|^2&\geq\frac{m_0}{2(m_0-1)}\frac{|\nabla|\nabla
h|^2|^2}{|\nabla h|^2}+\frac{2\left(|\nabla h|^2
+\lambda\right)^2}{m_0-1}-2\left[(n-1)K+\frac{\theta^2}{m_0-n}\right]|\nabla h|^2\\
&\,\,\,\,\,\,+\left[\frac{2\lambda}{(m_0-1)|\nabla
h|^2}-\frac{2m_0-4}{m_0-1}\right]\cdot\left\langle\nabla|\nabla
h|^2,\nabla h\right\rangle
\end{aligned}
\end{equation}
for all $x\in B_p(R)$.

\textbf{Step 2.} To obtain estimates \eqref{lemm1a} and
\eqref{lemm1b}, we apply the diffusion operator $L$ to a
suitable function, and then use the maximum principle argument.

(i) Now we introduce a cut-off function. Let
\[
\phi(x):=\eta\left(\frac{\rho(x)}{R}\right),
\]
where $\eta(t)$ is a non-negative cut-off function such that
$\eta(t)\equiv 1$ for $0\leq t\leq1$, $\eta(t)\equiv 0$ for $t\geq
2$ and $0\leq\eta(t)\leq 1$ for $1<t<2$. Furthermore, take the
derivatives of $\eta$ to satisfy
\[
-C\eta^{1/2}(r)\leq\eta'\leq0 \quad\mathrm{and}\quad
\eta''(r)\geq-C,
\]
where $0<C<\infty$ is a universal constant.
Here $\rho(x)$ denotes the distance from some fixed $p\in M^n$.
Using an argument of Calabi \cite{[Calabi]} (see also
\cite{[Cheng-Yau]} or \cite{[Li-Yau]}), we can assume without loss
of generality that $\phi(x) \in C^2(M^n)$ with support in $B_p(2R)$.

Since $Ric(L)\geq-(n-1)K$ and $\nabla\varphi\geq-\theta$, by the
weighted Laplacian comparison theorem (Theorem 1.1 (a) in
\cite{[WW]}),
\[
L\rho\leq(n-1)\sqrt{K}\coth(\sqrt{K}\rho)+\theta.
\]
Note that
\[
L\phi=\frac{\eta'L\rho}{R}+\frac{\eta''\left|\nabla
\rho\right|^2}{R^2}.
\]
According to the definition of $\eta$, the function $\phi$ satisfies
\begin{equation}\label{tiaojian2}
L\phi\geq-C_1\left[(\sqrt{K}+\theta)R^{-1}+R^{-2}\right],
\end{equation}
where $C_1$ is a constant, depending only on $n$ and $C$, and
\begin{equation}\label{tiaojian3}
\frac{|\nabla\phi|^2}{\phi}\leq \frac{C_2}{R^2},
\end{equation}
where $C_2$ is also a constant, depending only on $C$.

\vspace{0.5em}

(ii) Let $G:=\phi\cdot|\nabla h|^2$. Using inequality
\eqref{lemmin1}, we obtain
\begin{equation}
\begin{aligned}\label{jisuan8}
LG&=(L\phi)\cdot|\nabla h|^2+2\left\langle\nabla\phi,\nabla|\nabla
h|^2\right\rangle+\phi\cdot L|\nabla h|^2\\
&\geq\frac{L\phi}{\phi}\cdot G+2\frac{\langle\nabla \phi,\nabla
G\rangle}{\phi}-2\frac{|\nabla \phi|^2}{\phi^2}
G+\frac{m_0}{2(m_0-1)}\cdot\phi\cdot\frac{|\nabla|\nabla
h|^2|^2}{|\nabla h|^2}\\
&\,\,\,\,\,\,-2\left[(n-1)K+\frac{\theta^2}{m_0-n}\right]G
-\frac{2m_0-4}{m_0-1}\cdot\left\langle\nabla h,\nabla
G\right\rangle+\frac{2m_0-4}{m_0-1}\cdot\frac{\langle\nabla
h,\nabla \phi\rangle}{\phi} G\\
&\,\,\,\,\,\,+\frac{2\lambda\cdot\left\langle\nabla h,\nabla
G\right\rangle}{(m_0-1)|\nabla h|^2}-\frac{2\lambda
\cdot\left\langle\nabla h,\nabla \phi\right\rangle}{m_0-1}
+\frac{2}{m_0-1}\left(\phi^{-1}G^2+2\lambda G+\phi\lambda^2\right).
\end{aligned}
\end{equation}
In the following, we will estimate `bad' terms on the right hand
side (or RHS for short) of \eqref{jisuan8}. On one hand,
\begin{equation*}
\begin{aligned}
|\nabla G|^2&=\left|\nabla\left(\phi\cdot|\nabla h|^2\right)\right|^2\\
&=|\nabla\phi|^2\cdot|\nabla h|^4+2\phi|\nabla
h|^2\cdot\left\langle\nabla\phi,\nabla|\nabla
h|^2\right\rangle+\phi^2\left|\nabla|\nabla h|^2\right|^2\\
&=-\frac{|\nabla\phi|^2}{\phi^2}\cdot
G^2+2\frac{\langle\nabla\phi,\nabla G\rangle}{\phi}\cdot
G+\phi^2\left|\nabla|\nabla h|^2\right|^2.
\end{aligned}
\end{equation*}
This implies
\begin{equation}\label{huajian1}
\phi\cdot\frac{|\nabla|\nabla h|^2|^2}{|\nabla h|^2}=\frac{|\nabla
G|^2}{G}+\frac{|\nabla \phi|^2}{\phi^2}\cdot
G-2\frac{\langle\nabla\phi,\nabla G\rangle}{\phi}.
\end{equation}
On the other hand, we can easily have
\begin{equation}\label{huajian2}
2\frac{\langle\nabla h,\nabla \phi\rangle}{\phi} G\geq-2|\nabla
\phi|\phi^{-\frac32}G^{\frac32}
\quad \mathrm{and} \quad
-2\langle\nabla h,\nabla \phi\rangle\geq-2|\nabla
\phi|\phi^{-\frac12}G^{\frac12}.
\end{equation}
Substituting \eqref{huajian1} and \eqref{huajian2} into the
RHS of \eqref{jisuan8} gives
\begin{equation}
\begin{aligned}\label{jisuan9}
LG&\geq\frac{L\phi}{\phi}\cdot G+\frac{m_0-2}{m_0-1}\cdot
\frac{\langle\nabla\phi,\nabla G\rangle}{\phi}
-\frac{3m_0-4}{2(m_0-1)}\cdot\frac{|\nabla\phi|^2}{\phi^2}G\\
&\,\,\,\,\,\,+\frac{m_0}{2(m_0-1)}\cdot\frac{|\nabla G|^2}{G}
+\left[\frac{4\lambda}{m_0-1}-2(n-1)K-\frac{2\theta^2}{m_0-n}\right]G\\
&\,\,\,\,\,\,-\frac{2m_0-4}{m_0-1}\cdot\left\langle\nabla h,\nabla
G\right\rangle -\frac{2m_0-4}{m_0-1}\cdot|\nabla\phi|\phi^{-\frac32}
G^{\frac32}+\frac{2\lambda\cdot\left\langle\nabla h,\nabla
G\right\rangle}{(m_0-1)|\nabla h|^2}\\
&\,\,\,\,\,\,
-\frac{2\lambda}{m_0-1}\cdot|\nabla\phi|\phi^{-\frac12}G^{\frac12}
+\frac{2\phi^{-1}G^2}{m_0-1}+\frac{2\lambda^2\phi}{m_0-1}.
\end{aligned}
\end{equation}
Let $x_0\in B_p(2R)\subset M^n$ be a point where $G$ achieves a
maximum. By the maximum principle, we have
\begin{equation}\label{tiaojian1}
LG(x_0)\leq0,\quad\nabla G(x_0)=0.
\end{equation}

All further calculations in this proof will be at $x_0$. Multiplying
both sides of \eqref{jisuan9} by $(m_0-1)\phi$ and using
\eqref{tiaojian1}, then \eqref{jisuan9} reduces to
\begin{equation*}
\begin{aligned}
0\geq LG&\geq(m_0-1)L\phi\cdot G
-\frac{3m_0-4}{2}\cdot\frac{|\nabla\phi|^2}{\phi}G\\
&\,\,\,\,\,\,+\left[4\lambda-2(m_0-1)(n-1)K-\frac{2(m_0-1)\theta^2}{m_0-n}\right]\phi
G-(2m_0-4)\cdot|\nabla\phi|\phi^{-\frac12} G^{\frac32}\\
&\,\,\,\,\,\,-2\lambda\cdot|\nabla\phi|\phi^{\frac12}
G^{\frac12}+2G^2+2\lambda^2\phi^2.
\end{aligned}
\end{equation*}
Combining this with the above estimates of $\phi$ (
\eqref{tiaojian2} and \eqref{tiaojian3}), we get
\begin{equation}
\begin{aligned}\label{jisuan10}
0&\geq-\left(C_3R^{-1}(\sqrt{K}+\theta)+C_4R^{-2}\right)G
+\left[4\lambda\phi-2(m_0-1)(n-1)\phi K-\frac{2(m_0-1)\phi\theta^2}{m_0-n}\right]G\\
&\,\,\,\,\,\,-C_5R^{-1}G^{\frac32}-C_6\lambda
R^{-1}G^{\frac12}+2G^2+2\lambda^2\phi^2,
\end{aligned}
\end{equation}
where $C_3$ is some constant depending on $m_0$, $n$ and
$C$; $C_4$ and $C_5$ are fixed constants depending on
$m_0$ and $C$; and $C_6$ is also a constant, depending only on $C$.

\vspace{0.5em}

Since $x_0$ is the maximum point of the function $G$ and $\phi=1$ on
$B_p(R)$, hence
\[
\phi(x_0)|\nabla h|^2(x_0)\geq\sup_{B_p(R)}|\nabla h|^2(x).
\]
On the other hand, using the fact that
\[
\phi(x_0)|\nabla h|^2(x_0)\leq\phi(x_0)\sup_{B_p(2R)}|\nabla
h|^2(x),
\]
we conclude that $\sigma(R)\leq\phi(x_0)\leq1$,
where $\sigma(R)$ is defined by
\[
\sigma(R):=\frac{\sup_{B_p(R)}|\nabla h|^2(x)}{\sup_{B_p(2R)}|\nabla
h|^2(x)}.
\]
Applying this to \eqref{jisuan10} yields
\begin{equation}
\begin{aligned}\label{jisuan11}
0&\geq-\left[C_3R^{-1}(\sqrt{K}+\theta)+C_4R^{-2}
-4\lambda\sigma(R)+2(m_0-1)(n-1)K+\frac{2(m_0-1)\theta^2}{m_0-n}\right]G\\
&\,\,\,\,\,\,-C_5R^{-1}G^{\frac32}-C_6\lambda
R^{-1}G^{\frac12}+2G^2+2\lambda^2\sigma^2(R).
\end{aligned}
\end{equation}
Below we want to estimate some `bad' terms of the RHS of
\eqref{jisuan11}. Using the Schwarz inequality, we have the
following three inequalities:
\[
-C_5R^{-1}G^{\frac32}\geq-\epsilon
G^2-\frac{C^2_5}{4}\epsilon^{-1}R^{-2}G,
\]
\[
-C_6\lambda R^{-1}G^{\frac12}\geq-\epsilon
\lambda^2-\frac{C^2_6}{4}\epsilon^{-1}R^{-2}G
\]
and
\[
-C_3R^{-1}\sqrt{K}\geq-\epsilon K
-\frac{C^2_3}{4}\epsilon^{-1}R^{-2}
\]
for all $\epsilon>0$. Hence at the point $x_0$,
\eqref{jisuan11} can be rewritten as
\begin{equation}
\begin{aligned}\label{jisuan12}
0&\geq-\left[C_7(1+\epsilon^{-1})R^{-2}
-4\lambda\sigma(R)+\left(2(m_0-1)(n-1)+\epsilon\right)K
+\frac{2(m_0-1)\theta^2}{m_0-n}\right]G\\
&\,\,\,\,\,\,+(2-\epsilon)G^2+2\lambda^2\sigma^2(R)
-\epsilon\lambda^2,
\end{aligned}
\end{equation}
where $C_7$ is some constant, depending only on $m_0$,
$n$ and $C$. Now we have a quadratic inequality in $G$.
If $\epsilon<2$, then by \eqref{jisuan12} we get
\begin{equation}\label{diaojiana}
A^2-4(2-\epsilon)\lambda^2 (2\sigma^2(R)-\epsilon)\geq0,
\end{equation}
and an upper bound
\begin{equation}\label{estimate1}
G(x_0)\leq\frac{A+\sqrt{A^2-4(2-\epsilon)\lambda^2
\big(2\sigma^2(R)-\epsilon\big)}}{2(2-\epsilon)},
\end{equation}
where $A:=C_7(1+\epsilon^{-1})R^{-2}-4\lambda\sigma(R)
+\left[2(m_0-1)(n-1)+\epsilon\right]K+\frac{2(m_0-1)\theta^2}{m_0-n}$.

We will see that \eqref{diaojiana} and \eqref{estimate1} imply
estimates \eqref{lemm1a}, \eqref{lemm1b} and \eqref{eigeestup} in
Theorem \ref{L301}. In fact for any $x\in B_p(R)$,
\[
|\nabla h|^2(x)=\phi(x)|\nabla h|^2(x)\leq G(x_0).
\]
Combining this with \eqref{estimate1} and noticing that
$0\leq\sigma(R)\leq1$, we have that
\begin{equation}
\begin{aligned}\label{estimate2}
\frac{|\nabla f|^2}{f^2}(x)
&\leq\frac{2A+\sqrt{4(2-\epsilon)\lambda^2
\big(2\sigma^2(R)-\epsilon\big)}}{2(2-\epsilon)}\\
&\leq\frac{[2(m_0-1)(n-1)+\epsilon]K}{(2-\epsilon)}
+\frac{2(m_0-1)\theta^2}{(m_0-n)(2-\epsilon)}
+\tilde{C}\left[(1+\epsilon^{-1})R^{-2}+\lambda\right]
\end{aligned}
\end{equation}
for all $x\in B_p(R)$ and for any $0<\epsilon\leq2\sigma^2(R)<2$,
where $\tilde{C}$ is a constant, depending only on $m_0$, $n$ and
$C$. Hence the proof of the estimate \eqref{lemm1a} is finished.

\vspace{0.5em}

In the following we assume that $f$ is defined on $M^n$. For any
$0<\epsilon<2$, if we take $R\to\infty$ in \eqref{estimate1}, then
$\sigma(R)\to1$ and \eqref{estimate1} becomes
\begin{equation*}
\begin{aligned}
|\nabla h|^2(x)&\leq\frac{\left[-4\lambda+\Big(2(m_0-1)(n-1)
+\epsilon\Big)K\right]}{{2(2-\epsilon)}}+\frac{(m_0-1)\theta^2}{(m_0-n)(2-\epsilon)}\\
&\,\,\,\,\,\,+\frac{\sqrt{\left[-4\lambda
+\Big(2(m_0-1)(n-1)+\epsilon\Big)K+\frac{2(m_0-1)\theta^2}{m_0-n}\right]^2
-4(2-\epsilon)^2\lambda^2}}{2(2-\epsilon)}.
\end{aligned}
\end{equation*}
Letting $\epsilon\rightarrow0+$, we obtain \eqref{lemm1b}.

At last, since $f$ is defined on $M^n$, $\sigma(R)\to1$ in
\eqref{diaojiana} as $R\to\infty$. Then taking
$\epsilon\rightarrow0$, the inequality \eqref{diaojiana} becomes
\[
\frac{\left[(m_0-1)(n-1)K+\frac{m_0-1}{m_0-n}\theta^2\right]^2}{4}
-\left[(m_0-1)(n-1)K+\frac{m_0-1}{m_0-n}\theta^2\right]\lambda\geq0.
\]
Thus
\[
\lambda\leq\frac{(m_0-1)(n-1)K+\frac{m_0-1}{m_0-n}\theta^2}{4}.
\]
This finishes the proof of Theorem \ref{L301}.
\end{proof}

\section*{Acknowledgment}
The author would like to express his gratitude to the referee 
for pointing out a simple proof of Theorem \ref{L301}, as 
well as the optimization of eigenvalue estimate of Theorem
\ref{mainthm} (see Remark \ref{rem11b}).

\bibliographystyle{amsplain}

\end{document}